\theoremstyle{definition}
\newtheorem{theorem}{Theorem}
\newtheorem{problem}[theorem]{Problem}
\newtheorem{proposition}[theorem]{Proposition}
\def\graph{\mathrm{GRAPH}}
\newcommand{\abstractbody}[1]{\noindent #1 \par \vspace{\baselineskip}}
\begin{document}

\title[Open Problems in Computability Theory and DST]{Open Problems in Computability Theory and Descriptive Set Theory}
\author[G. Barmpalias]{George Barmpalias}
%\address{State Key Lab of Computer Science, Institute of Software Chinese Academy of Sciences, Beijing, China}
%\email{barmpalias@gmail.com}
\author[N. Bazhenov]{Nikolay Bazhenov}
%\address{Sobolev Institute of Mathematics, pr. Akad. Koptyuga 4, Novosibirsk, 630090 Russia}
%\email{bazhenov@mail.math.nsc.ru}
\author[C. T. Chong]{Chi Tat Chong}
%\address{Department of Mathematics, National University of Singapore, Singapore, 119076, Singapore}
%\email{chongct@nus.edu.sg}
\author[W. Dai]{Wei Dai}
%\address{School of Mathematical Sciences and LPMC, Nankai University, Tianjin 300071, P.R. China}
%\email{weidai@nankai.edu.cn}
\author[S. Gao et al.]{Su Gao}
%\address{School of Mathematical Sciences and LPMC, Nankai University, Tianjin 300071, P.R. China}
%\email{sgao@nankai.edu.cn}
\author[]{Jun Le Goh}
%\address{Department of Mathematics, National University of Singapore, Singapore 119076, Singapore}
%\email{gohjunle@nus.edu.sg}
\author[]{Jialiang He}
%\address{School of Mathematics, Sichuan University, Chengdu 610064, China}
%\email{jialianghe@scu.edu.cn}
\author[]{Keng Meng Selwyn Ng}
%\address{Division of Mathematical Sciences, School of Physical and Mathematical Sciences, Nanyang Technological University, 21 Nanyang Link, Singapore, 637371, Republic of Singapore}
%\email{kmng@ntu.edu.sg}
\author[]{Andre Nies}
%\address{Department of Computer Science, University of Auckland, Private Bag 92019, Auckland, New Zealand}
%\email{andre@cs.auckland.ac.nz}
\author[]{Theodore Slaman}
%\address{Department of Mathematics, University of California, Berkeley}
%\email{slaman@math.berkeley.edu}
\author[]{Riley Thornton}
%\address{CMU Math Department, 5000 Forbes Ave, Pittsburgh PA, 15232}
%\email{rthornto@andrew.cmu.edu}
\author[]{Wei Wang}
%\address{Institute of Logic and Cognition and Department of Philosophy, Sun Yat-Sen University, Guangzhou, China}
%\email{wwang.cn@gmail.com}
\author[]{Jing Yu}
%\address{Shanghai Center for Mathematical Sciences, Fudan University, Shanghai, China}
%\email{jyu@fudan.edu.cn}
\author[]{Liang Yu}
%\address{Department of Mathematics, Nanjing University, Jiangsu Province 210093, P. R. of China }
%\email{yuliang.nju@gmail.com}

\date{June 2025}

\begin{abstract}
These open problems were presented in the Problem Sessions held during the Tianyuan Workshop on Computability Theory and Descriptive Set Theory, June 16-20, 2025. The problems are organized into sections named after their contributors, in the order of their presentations during the workshop. Notes were taken and compiled by Wei Dai, Feng Li, Ruiwen Li, Ming Xiao, Xu Wang, Víctor Hugo Yañez Salazar, and Yang Zheng.
\end{abstract}
\maketitle
\tableofcontents
%\newpage

\section{Theodore Slaman}
%\abstractauthor{Theodore Slaman}
\abstractbody{
Given $A\subseteq \mathbb{R}$. Define
$$I(A)=\{f \text{ gauge function } : H^f(A)>0\}.$$

\begin{problem}
    Natural questions about $A \mapsto I(A)$:
    \begin{enumerate}
        \item Does the range of $I$ have the same cardinality as $2^{\mathbb{R}}$? (Seems yes under GCH.)
        \item If we consider the closed set $C$, $I(C)$ is arithmetic ($\mathbf{\Sigma}_2^0$). What's the complexity of the set
$$\big\{\mathbf{\Sigma}_2^0 \text{ formula } \varphi : \varphi \text{ defines an } I(C) \text{ for some } C\big\}?$$ Is it $\mathbf{\Sigma}_2^1$-complete? (It is $\mathbf{\Pi}_1^1$-hard.)
        \item Is the answer to "Borel hierarchy is proper w.r.t. the range of $I$" independent of ZFC?
        \item Is it consistent with ZFC that for every $f$ and $A\subseteq \mathbb{R}$, if $H^f(A)>0$, then there is a subset $A_0$ such that $H^f(A_0)$ is finite and positive?
    \end{enumerate}
\end{problem}
}
\begin{bibentry}{9}

\end{bibentry}

\section{George Barmpalias}
%\abstractauthor{George Barmpalias}
\abstractbody{
\begin{problem}
    Let $L_x=\{z : z \leq_K^+ x\}$ and $U_x=\{z : z \geq_K x\}$. Are the following true?
    \begin{enumerate}
        \item[(1)] $L_x$ is countable iff $\liminf_{n\to\infty}(K(x\upharpoonright n)-K(n))<\infty$;
        \item[(2)] $U_x$ is countable iff $\liminf_{n\to\infty}(K(n)+n-K(x\upharpoonright n))<\infty$.
    \end{enumerate}
\end{problem}

\begin{problem}
    Are the following true?
    \begin{enumerate}
        \item[(1)] For any c.e. set $A$, there exists a c.e. set $D\subseteq 2\mathbb{N}$, such that $A\equiv_{rK}D$ and $C(A\upharpoonright n|D\upharpoonright n)=C(D\upharpoonright n|A\upharpoonright n)=O(1)$;
        \item[(2)] For all $x$, there is a random $z$ such that $z \geq_{rK} x$ (or $z \geq_K x$).
    \end{enumerate}
\end{problem}

Consider the Even Number Game defined in \cite{Ba}. Given a natural number $k\geq 2$, the game $G_k$ is a game with two players who take turns to play natural numbers. Player 1 plays a set $A$ of $k$ integers where none of the members of $A$ has been played by her before; then Player 2 plays an even number between $\min(A)$ and $\max(A)$ (inclusive) which has not been played by him before. Player II loses when he has no legal moves.

It was shown in \cite{Ba} that for $k=2,3$, Player 1 has a winning strategy in the game $G_k$. For $k\geq 4$, this is unknown.

\begin{problem}
    For $k\geq 4$, does Player 1 have a winning strategy in the game $G_k$?
\end{problem}

\begin{bibentry}{9}

\end{bibentry}

}

\section{Jun Le Goh}
%\abstractauthor{Jun Le Goh}
\abstractbody{
Recall that \textbf{WF} is the collection of all well-founded trees on $\omega$ and \textbf{UB} is the collection of all trees on $\omega$ which have a unique infinite branch.
It is known that \textbf{WF} and \textbf{UB} are both coanalytic complete.

Saint Raymond \cite{Raymond} proved in 2007 that (\textbf{WF}, \textbf{UB}) is a coanalytic complete pair using pure Descriptive Set Theory methods, i.e., for every disjoint pair $(A,B)$ of coanalytic sets in the Baire space $\omega^\omega$, there is a continuous function $f : \omega^\omega \to \omega^\omega$ such that $A = f^{-1}[\text{\textbf{WF}}]$ and $B = f^{-1}[\text{\textbf{UB}}]$.

Using recursion theory, Goh proved that any coanalytic separator of \textbf{WF} and \textbf{UB} is coanalytic complete. The method also proves Saint Raymond's theorem.
\begin{problem}
    Can this be proved using Descriptive Set Theoretic methods?
\end{problem}

\begin{bibentry}{9}

\end{bibentry}
}

\section{Liang Yu}{}
%\abstractauthor{Liang Yu}
\abstractbody{
\begin{problem}[Downey--Hirschfeldt--Miller--Nies \cite{DHMN}]
    Is there an $x$ such that $\Omega^x \geq_T 0^{''}$ where
    $$\Omega^x = \sum_{\sigma \in 2^{<\omega}} 2^{-K^x(\sigma)}$$
    and
    $$K^x(\sigma) = \min\{|\tau| : U^x(\tau) = \sigma\}?$$
\end{problem}

Related to this question, Yu and Zhao proved the following proposition:

\begin{proposition}[Yu--Zhao]
    $\forall y \exists x \, \Omega^x \oplus 0^{'} \geq_T y$.
\end{proposition}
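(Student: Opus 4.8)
The plan is to separate an always-true relativized identity from the one genuinely constructive step. First I would record the standard fact that for every $x$,
$$\Omega^x \oplus x \equiv_T x'.$$
This holds because $\Omega^x$ is a left-c.e.\ real relative to $x$ (hence $\leq_T x'$), while conversely, knowing enough bits of $\Omega^x$ together with $x$ lets one run the $x$-enumeration of $\mathrm{dom}(U^x)$ until the captured measure exceeds $\Omega^x - 2^{-n}$, thereby deciding the halting of all oracle programs of length $\leq n$ and so computing $x'$. Granting this, it suffices to prove the lemma: for every $y$ there is an $x$ with (i) $x \geq_T y$ and (ii) $x \leq_T \Omega^x \oplus 0'$. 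Indeed, from (ii) we get $\Omega^x \oplus 0' \geq_T \Omega^x \oplus x \oplus 0' \equiv_T x' \geq_T x \geq_T y$ by (i), which is exactly the conclusion.

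The crux is thus the self-referential requirement (ii) maintained alongside (i). I would build $x$ by a Ku\v{c}era--G\'acs-style coding on a tree, exploiting the monotone approximation $\Omega^\sigma \nearrow \Omega^x$ as $\sigma \nearrow x$ (halting of a finite-oracle computation is preserved under extension of the oracle, so the halting measure is nondecreasing along any chain), together with the fact that $0'$ computes each $\Omega^\sigma$ as a real uniformly in $\sigma$. The idea is to designate coding locations along the path and, at the $n$-th location, to choose between two sufficiently $\Omega$-heavy extensions according to the bit $y(n)$; this codes $y$ into $x$, giving (i). For (ii), the decoder is handed the true value $\Omega^x$ and the oracle $0'$, and follows the initial segments $\sigma \prec x$ along the branch whose approximation $\Omega^\sigma$ climbs toward $\Omega^x$ fastest, reading off each coded bit from which of the two extensions was taken, the two options having been separated by an $\Omega$-value threshold that $0'$ can test.

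The main obstacle is the blurring inherent in $\Omega^x$: its value aggregates the contributions of all $x$-programs, including those depending on the entire future of $x$, and $0'$ alone cannot evaluate these (this is precisely why $\Omega^x \oplus 0'$ is generally strictly weaker than $\Omega^x \oplus x \equiv_T x'$, and why the Downey--Hirschfeldt--Miller--Nies question is hard). The heart of the argument is therefore to design the coding so that the gap between the two extensions at each coding node provably exceeds the total ambiguity that future contributions can introduce, so that comparing the global value $\Omega^x$ against the $0'$-computable lower approximations $\Omega^\sigma$ recovers the branch unambiguously; this is the delicate measure bookkeeping that adapts Ku\v{c}era--G\'acs coding to the halting-measure operator $x \mapsto \Omega^x$. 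As a reserve route I would try to phrase (ii) as a jump-inversion problem for the $\Delta^0_2$ operator $x \mapsto \Omega^x$ relative to $0'$ and invoke a Jockusch--Shore-type inversion, though checking the hypotheses for this non-c.e.\ operator appears to demand essentially the same measure-theoretic control.
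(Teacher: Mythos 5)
The paper states this proposition without proof (it appears in an open-problems compilation, attributed to Yu--Zhao), so there is no in-paper argument to compare against; I can only assess your proposal on its own terms. Your overall architecture is reasonable: the fact $\Omega^x \oplus x \equiv_T x'$ is indeed standard (it relativizes $\Omega \equiv_T \emptyset'$), the reduction to finding $x$ with $x \geq_T y$ and $x \leq_T \Omega^x \oplus 0'$ is sound (though the detour through $x'$ is unnecessary --- once the decoder recovers the coding branch from $\Omega^x \oplus 0'$ it reads off $y$ directly), and coding along a path with a threshold test on the value of $\Omega^x$ is the right mechanism.

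The genuine gap is that you never establish the splitting lemma on which everything rests: for every $\sigma$ there exist extensions $\tau^0, \tau^1 \succ \sigma$ and a rational $q$ with $\sup_{z \succ \tau^0} \Omega^z < q < \Omega^{\tau^1}$, and $0'$ can find such a triple uniformly. This is exactly the statement that ``the gap between the two extensions exceeds the total ambiguity of future contributions,'' and you assert it as a design goal rather than prove it. It is not automatic: the lower approximations $\Omega^\sigma$ that $0'$ controls say nothing about how much mass programs with large use will later add, and a priori the map $z \mapsto \Omega^z$ could be essentially constant on the cone above $\sigma$, in which case no $\Omega$-value threshold separates any two subcones and the coding collapses at that node. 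Ruling this out needs an actual argument; one route is: if $\sup_{z \succ \tau} \Omega^z$ took the same value $c$ for every $\tau \succ \sigma$, then each set $\{z \succ \sigma : \Omega^z > c - 2^{-n}\}$ would be dense open in $[\sigma]$, so $\{z \succ \sigma : \Omega^z = c\}$ would be comeager in $[\sigma]$; but $c = \sup_{\tau \succ \sigma} \Omega^\tau$ is a left-c.e.\ real, hence $\Delta^0_2$, and a sufficiently powerful $z$ in that comeager set (such sets contain members of all sufficiently high degrees) would compute $\Omega^z = c$, contradicting the Martin-L\"of randomness of $\Omega^z$ relative to $z$. One must then also note that ``$\sup_{z \succ \tau^0}\Omega^z \leq q$'' is $\Pi^0_1$ and ``$\Omega^{\tau^1} > q$'' is $\Sigma^0_1$, so $0'$ can search for and certify a valid triple --- this is what makes both the construction and the decoding $0'$-effective. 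Two smaller points: your decoder description ``follow the branch whose approximation climbs toward $\Omega^x$ fastest'' is not a well-defined or decidable procedure and should be replaced outright by the threshold test; and the fallback via Jockusch--Shore-style jump inversion is not developed and does not obviously apply to the operator $x \mapsto \Omega^x$.
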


\begin{theorem}[Velickovic--Woodin \cite{VM}]
    If $A$ is $\mathbf{\Sigma}_1^1$ and $\sup\{\omega_1^{CK(x)} : x \in A\} = \omega_1$, then $\forall \gamma \exists x_0, x_1, x_2, x_3 \in A$,
    $$\gamma \leq_h x_0 \oplus x_1 \oplus x_2 \oplus x_3.$$
\end{theorem}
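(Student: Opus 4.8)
The statement is immediate if one reads $\gamma$ as a countable ordinal and $\gamma \leq_h z$ as $\gamma < \omega_1^{CK(z)}$: the hypothesis says $\{\omega_1^{CK(x)} : x \in A\}$ is unbounded in $\omega_1$, so a single $x_0 \in A$ already has $\omega_1^{CK(x_0)} > \gamma$ and any padding gives the conclusion. The content therefore lies in the reading where $\gamma$ ranges over arbitrary reals and $\leq_h$ is hyperarithmetic reducibility, i.e.\ the theorem is a covering statement: the fourfold joins of members of $A$ are cofinal in the hyperdegrees. So I aim to prove that for every real $\gamma$ there are $x_0,x_1,x_2,x_3 \in A$ with $\gamma \leq_h x_0 \oplus x_1 \oplus x_2 \oplus x_3$.

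The plan is to first convert the hypothesis into a structural dividend. Since $\{\omega_1^{CK(x)} : x \in A\}$ is a set of countable ordinals with supremum $\omega_1$, the set $A$ cannot be countable (countably many countable ordinals have countable supremum); being $\mathbf{\Sigma}_1^1$ and uncountable, $A$ contains a perfect set by the perfect set property for analytic sets, valid in ZFC. Fix a perfect tree $T$ with $[T] \subseteq A$, coded by a real $t$, together with the canonical embedding $\gamma \mapsto x_\gamma \in [T] \subseteq A$ routing $\gamma$ through the splitting nodes of $T$; then $\gamma$ is recoverable recursively from $x_\gamma \oplus t$, so $\gamma \leq_h x_\gamma \oplus t$. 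This reduces the whole problem to recovering the fixed coding apparatus $t$ from boundedly many further members of $A$: if $t \leq_h x_1 \oplus x_2 \oplus x_3$ for some $x_i \in A$, then $\gamma \leq_h x_\gamma \oplus x_1 \oplus x_2 \oplus x_3$, four members in all.

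For the recovery I would use the cofinality of the $\omega_1^{CK(x)}$ as the essential resource, and it is essential: if $A$ were merely a perfect set of mutually Cohen-generic reals it would be uncountable yet have all $\omega_1^{CK(x)} = \omega_1^{CK}$, and then no finite join could hyperarithmetically compute, say, $\gamma = \mathcal{O}$, so the conclusion would fail. Writing $A$ as $\Sigma_1^1(a)$, the perfect tree $t$ can be taken with $t \leq_h \mathcal{O}^a$, the $a$-relativized Kleene ordinal notation. Cofinally many $x \in A$ have $\omega_1^{CK(x)}$ above $\omega_1^{CK(a)}$, and for such $x$ one has $\mathcal{O}^a \leq_h x$ provided $x$ also computes $a$; so the plan is to run a Gandy basis / Gandy--Harrington forcing argument inside $A$ producing high-$\omega_1^{CK}$ members $x_1,x_2,x_3 \in A$ whose join both computes the parameter $a$ and outruns $\omega_1^{CK(a)}$, hence hyperarithmetically computes $\mathcal{O}^a$ and so $t$.

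The hard part will be exactly this recovery, and it is where the parameter $a$ and the count four enter. Members of $A$ need not individually, nor even obviously jointly, compute $a$, yet taking $\gamma = a$ shows the theorem forces some fourfold join to do so; one must therefore genuinely extract $a$ together with an ordinal scale exceeding $\omega_1^{CK(a)}$ out of high-complexity members of $A$, using only the cofinality hypothesis to steer a reflection argument. Squeezing this to three reals for the apparatus (one coordinate already being spent on the embedded code $x_\gamma$) is the delicate combinatorial accounting that produces the bound four, and I would expect the sharpness of ``four'' to rest on precisely this decomposition into one coding coordinate and a small fixed number of apparatus coordinates.
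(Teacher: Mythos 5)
First, a point of orientation: the paper does not prove this statement at all. It is quoted from Velickovic--Woodin \cite{VM}, and the problem immediately following it asks precisely whether a recursion-theoretic proof exists. So there is no in-paper argument to compare yours against, and your proposal has to be judged on its own terms. On those terms, your setup is sensible --- the reading of $\gamma$ as a real and $\leq_h$ as hyperarithmetic reducibility is the intended one, the observation that $A$ is uncountable and hence contains a perfect set is correct, the effective perfect set theorem does give a perfect tree $T$ with $[T]\subseteq A$ and code $t\leq_h \mathcal{O}^a$, and $\gamma\leq_T x_\gamma\oplus t$ is fine. Your remark that mere uncountability would not suffice is also well taken.

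The genuine gap is that your reduction is essentially circular, and the step that would discharge it is exactly the theorem you are trying to prove. You reduce everything to finding $x_1,x_2,x_3\in A$ with $t\leq_h x_1\oplus x_2\oplus x_3$, and propose to achieve this by arranging $a\leq_h x_1\oplus x_2\oplus x_3$ together with $\omega_1^{CK(x_1\oplus x_2\oplus x_3)}>\omega_1^{CK(a)}$. But ``some finite join of members of $A$ hyperarithmetically computes $a$'' (equivalently, computes $\mathcal{O}^a$, hence $t$) is precisely the instance $\gamma=a$ (resp.\ $\gamma=\mathcal{O}^a$) of the theorem; you have traded the general statement for a single instance that is no easier, and you supply no mechanism for it. The tools you name do not supply one: the Gandy basis theorem pulls in the opposite direction, producing $x$ with $\omega_1^{CK(x\oplus a)}=\omega_1^{CK(a)}$, i.e.\ \emph{low} witnesses, whereas what you need are members of $A$ whose join is forced to absorb the parameter $a$; and the cofinality hypothesis by itself only hands you elements with large $\omega_1^{CK}$, with no a priori relation to $a$. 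Everything labelled ``the hard part'' --- extracting $a$ and an ordinal scale beyond $\omega_1^{CK(a)}$ from members of $A$, and the accounting that yields the bound four --- is exactly the content of the Velickovic--Woodin argument, and it is absent here. Until you exhibit a concrete coding device (e.g.\ a way of steering the choice of several high-$\omega_1^{CK}$ elements of $A$ so that a fixed real is recoverable from the join), this is a restatement of the problem rather than a proof.
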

\begin{problem}
    \begin{enumerate}[(a)]
        \item Is there a recursion theoretical proof for the above theorem of Velickovic--Woodin?
        \item Is the following true: For any $\Sigma_1^1(y)$ set $A$, if there is $x \in A$ such that $\omega_1^{CK(x)} > \omega_1^{CK(y)}$, then the Velickovic--Woodin theorem holds for $A$?
    \end{enumerate}
\end{problem}

\begin{bibentry}{9}

\end{bibentry}

}

\section{Wei Wang}
%\abstractauthor{Wei Wang}
\abstractbody{
We start with some definitions.

A tree $T \subseteq 2^{<\omega}$ is \emph{positive} if $[T]$ has measure $>0$.

Consider the following statements:
\begin{quote}
  $P^+$: For every positive tree $T$, there is a subtree $S \subseteq T$ which is perfect and positive.

  $P$: For every positive tree $T$, there is a subtree $S \subseteq T$ which is perfect.

  $P^-$: For every positive tree there are distinct $(X_n : n \in \omega)$ such that $X_n \in [T]$.
\end{quote}
It is known that
$$\text{WKL}_0 \vdash P^+$$
and
$$\text{RCA}_0 \vdash P^+ \to P \to P^- \to \text{1-RAN} \leftrightarrow \text{WWKL}_0,$$
where $\text{1-RAN}$ means there exists a 1-random. But we also know that
$$P^+ \nvdash \text{WKL}_0,$$
$$\text{RCA}_0 \nvdash P \to P^+,$$
and
$$\text{RCA}_0 \nvdash \text{1-RAN} \to P^-.$$

The questions are:
\begin{problem}
    Is it true that $\text{RCA}_0 \vdash P^- \to P$?
\end{problem}
\begin{problem}
    How about using dimension? (Define the above concepts by replacing measure by dimension.)
\end{problem}

Consider the following statements:
\begin{quote}
  $\text{PHP}(\Sigma_{n+1})$: There is no injective $F \in \Sigma_{n+1}$ such that for some positive $x$, $F : x+1 \to x$.

  $\text{WPHP}(\Sigma_{n+1})$: There is no injective $F \in \Sigma_{n+1}$ such that for some positive $x$, $F : 2x \to x$.

  $\text{GPHP}(\Sigma_{n+1})$: For any $x$, there is a $y$ such that there exists no $\Sigma_{n+1}$ injection with $F : y \to x$.

  $\text{GARD}(\Sigma_{n+1})$: For any $x$, there is no $\Sigma_{n+1}$ injection $F$ with $F : \mathbb{N} \to x$.
\end{quote}

It is known that, over $\text{I}\Sigma_n$,
$$\text{B}\Sigma_{n+1} \leftrightarrow \text{PHP}(\Sigma_{n+1}) \vdash \text{WPHP}(\Sigma_{n+1}) \to \text{GPHP}(\Sigma_{n+1}) \to \text{CARD}(\Sigma_{n+1}).$$
But we also know that
$$\text{PHP}(\Sigma_{n+1}) \nvdash \text{GPHP}(\Sigma_{n+1}) \to \text{WPHP}(\Sigma_{n+1})$$
and
$$\text{PHP}(\Sigma_{n+1}) \nvdash \text{CARD}(\Sigma_{n+1}) \to \text{GPHP}(\Sigma_{n+1}).$$

Finally, consider
\begin{quote}
  $\text{FRT}^e_k(\Sigma_{n+1})$: For any $x$, there is a $y$ such that every $\Sigma_{n+1} \, C : [y]^e \to k$ has a homogeneous set $H$ with $|H| \geq x$.
\end{quote}

It is known that
$$\text{I}\Sigma_n + \text{WPHP}(\Sigma_{n+1}) + \text{FRT}^e_k(\Sigma_{n+1})(e,k \text{ standard}) \nvdash \text{B}\Sigma_{n+1}$$
and
$$\text{FRT}^e_k(\Sigma_{n+1}) \nvdash \text{WPHP}(\Sigma_{n+1}).$$

The questions are:
\begin{problem}
    $\text{I}\Sigma_{n+1} + \text{WPHP}(\Sigma_{n+1}) \vdash \text{FRT}_2^2(\Sigma_{n+1})$? $\text{FRT}_2^2(\Sigma_{n+1}) \vdash \text{FRT}_2^3(\Sigma_{n+1})$?
\end{problem}
\begin{problem}
    The first-order theory \text{2-RAN} is between $\text{CARD}(\Sigma_2)$ and $\text{WPHP}(\Sigma_2)$, but how about $\text{GPHP}(\Sigma_2)$ and \text{2-RAN}?
\end{problem}
}

\section{Andre Nies}
%\abstractauthor{Andre Nies}
\abstractbody{
\subsection*{Profinite groups}
A f.g. group $S = F_k/N$ is effectively residually finite (e.r.f.) if there is an algorithm that, on input $w \in F_k$, in case $w \notin N$ computes a finite group $Q$ and homomorphism $r : F_k \to Q$ such that $r(N) = \{e\}$ and $r(w) \neq e$.

\begin{theorem}
    A f.g. group $L$ is isomorphic to a subgroup of some computable profinite group that is generated by finitely many computable paths iff the following two conditions hold:
    \begin{enumerate}[(a)]
        \item $L$ has a $\Pi_1^0$ word problem (call this a $\Pi$-group)
        \item $L$ is effectively residually finite.
    \end{enumerate}
\end{theorem}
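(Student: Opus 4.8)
\emph{Plan of proof.} I will use the standard presentation of a computable profinite group $G$ as an inverse limit $\varprojlim_n G_n$ of a uniformly computable sequence of finite groups $(G_n)$ with computable surjective bonding maps $\pi_n : G_{n+1} \to G_n$, together with the fact that a computable path is a computable coherent sequence $p = (p_n)_n$ with $p_n \in G_n$ and $\pi_n(p_{n+1}) = p_n$. The single observation driving both directions is that, for computable paths $p^{(1)},\dots,p^{(k)}$ and a word $w \in F_k$, the element $w(p^{(1)},\dots,p^{(k)})$ equals the identity of $G$ if and only if $w(p^{(1)}_n,\dots,p^{(k)}_n)=e_{G_n}$ for every $n$; and each such level condition is decidable uniformly in $w$ and $n$, since the finite groups $G_n$ and the values $p^{(i)}_n$ are computable.

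For the forward direction I would suppose $L$ is isomorphic to the abstract subgroup $\langle p^{(1)},\dots,p^{(k)}\rangle \le G$, and present $L \cong F_k/N$ with $N=\{w : w(\vec p)=e\}$. By the observation, $w\in N$ is equivalent to the $\Pi^0_1$ condition $\forall n\,(w(\vec p_n)=e_{G_n})$, so $L$ is a $\Pi$-group. For effective residual finiteness, given $w\notin N$ I would search for the least $n$ with $w(\vec p_n)\neq e_{G_n}$; this halts precisely because $w\notin N$. Outputting $Q=G_n$ and the homomorphism $r:F_k\to G_n$ determined by $x_i\mapsto p^{(i)}_n$, one has $r(w)\neq e$ by the choice of $n$, while $r(N)=\{e\}$ because any $v\in N$ satisfies $v(\vec p_n)=e_{G_n}$; hence $L$ is e.r.f.

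For the converse, suppose $L=F_k/N$ is a $\Pi$-group and e.r.f. Since the word problem is $\Pi^0_1$, the complement $F_k\setminus N$ is c.e., so I can computably enumerate the nontrivial words $w_0,w_1,\dots$. Applying the e.r.f. algorithm to each $w_j$ (which halts, as $w_j\notin N$) yields, uniformly in $j$, a finite group $Q_j$ and a homomorphism $r_j:F_k\to Q_j$ with $r_j(N)=\{e\}$ and $r_j(w_j)\neq e$. Setting $G_n=\prod_{j<n}Q_j$ with the coordinate projections as bonding maps gives a computable inverse system of finite groups with surjective bonding maps, hence a computable profinite group $G=\varprojlim_n G_n=\prod_j Q_j$, and the sequences $p^{(i)}=\big((r_0(x_i),\dots,r_{n-1}(x_i))\big)_n$ are computable paths. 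The homomorphism $x_i\mapsto p^{(i)}$ is exactly $w\mapsto (r_j(w))_j$; its kernel contains $N$ since every $r_j$ kills $N$, and is contained in $N$ since each nontrivial $w=w_j$ has $r_j(w_j)\neq e$. Thus $\langle p^{(1)},\dots,p^{(k)}\rangle\cong F_k/N=L$. If one insists the ambient group be topologically generated by the paths, I would replace $G$ by the closure $\varprojlim_n H_n$, where $H_n\le G_n$ is the finite subgroup generated by the level-$n$ components $p^{(i)}_n$; the $H_n$ are uniformly computable and the projections restrict to surjections $H_{n+1}\to H_n$, again yielding a computable profinite group.

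The main obstacle is the converse, and specifically using both hypotheses in tandem and only in the legitimate way. Effective residual finiteness alone yields no decision procedure for $N$: the e.r.f. algorithm is promised to behave correctly only on inputs outside $N$, and a homomorphism it might emit on some $w\in N$ need not kill $N$, so feeding all words to it indiscriminately could shrink the kernel below $N$ and destroy the isomorphism with $L$. This is exactly where the $\Pi^0_1$ word problem is indispensable: it lets me restrict the e.r.f. algorithm to genuinely nontrivial words, guaranteeing that every quotient $Q_j$ respects $N$. The remaining work is bookkeeping, namely checking that the $Q_j$ and the bonding maps are obtained uniformly computably (so the inverse system and the paths are computable) and that the resulting object meets the precise definition of computable profinite group in force, including surjectivity of bonding maps after passing to the closure in the topologically-generated version.
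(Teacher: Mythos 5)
The paper states this theorem without proof (it appears only as background in the problem-session notes), so there is nothing to compare against; judged on its own, your argument is correct and is the standard one: identity in the inverse limit is the conjunction of the decidable level conditions (giving the $\Pi^0_1$ word problem and, by searching for a failing level, effective residual finiteness), and conversely the c.e. complement of $N$ lets you invoke the e.r.f.\ algorithm only on genuinely nontrivial words to build $\prod_j Q_j$ with the coherent sequences $(r_j(x_i))_j$ as computable paths. Your closing remarks correctly isolate the two points that need care — that e.r.f.\ alone gives no control over $N$ without the $\Pi^0_1$ hypothesis, and that passing to $\varprojlim_n\langle p^{(1)}_n,\dots,p^{(k)}_n\rangle$ handles topological generation — with the only caveat being that the argument presupposes the inverse-limit presentation of a computable profinite group, which is the definition in force in the cited literature.
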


\begin{problem}
    \begin{enumerate}[(a)]
        \item Can such a group $L$ have unsolvable word problem?
        \item Is there a f.g., residually finite $\Pi$-group that is not effectively r.f.?
        \item Morozov (Higman’s question revisited, 2000) constructed a $\Pi$-group that is not isomorphic to a subgroup of $S_{\text{rec}}$. Can we make such a group r.f.? It can't be e.r.f.
    \end{enumerate}
\end{problem}

For each left $\Sigma_2$ real $r \in [0, 1]$ there is a computable profinite group and computable subgroup with Hausdorff dimension $r$.

\begin{problem}
    Which values can occur for $r$ when $G$ is also topologically finitely generated?
\end{problem}

Nies, Segal, and Tent 2021 \cite{NST} studied expressiveness of f.o. logic for profinite group. Many such groups are axiomatised by single sentence in the language of groups, among this reference class.

\begin{problem}
    For a f.o. sentence $\varphi$, what is the possible complexity of $\{G : G \models \varphi\}$?
\end{problem}

The following problem was also posed during the workshop, and by the end of the workshop Gao and Nies realized that the answer is yes.

\begin{problem}
    Is there some Borel class of profinite groups with isomorphism relation properly between $S_\infty$-complete and smooth? In particular, how about the Abelian case?
\end{problem}
Update: The answer to this problem is yes. Due to the Pontryagin duality, the isomorphism relation of profinite abelian groups is Borel bireducible with the isomorphism of countable torsion abelian groups. It is well known that the latter relation is classified by the Ulm invariants, and is known to be $\mathbf{\Sigma}_1^1$-complete. In terms of Borel reducibility, it is strictly above the smooth equivalence relation and strictly below the graph isomorphism; in particular, it is not above $E_0$.

One could still consider other classes of profinite groups, such as small (only finitely many subgroups of each finite index), or strongly complete (each subgroup of finite index is open). See Dan Segal's work for references.

\subsection*{Complexity of isomorphism of oligomorphic groups}
\begin{problem}
    Nies, Schlicht, and Tent proved in 2019 \cite{NST2} that the topological isomorphism of oligomorphic groups is $\leq_B E_\infty$. Is it smooth?
\end{problem}

\begin{theorem}[Nies--Paolini \cite{NP}]
    \begin{enumerate}[(1)]
        \item Let $G$ be a non-archimedean Roelcke precompact group. Then the group $\text{Aut}(G)$ of continuous automorphisms of $G$ carries a natural Polish topology and $\text{Inn}(G)$ is closed in it.
        \item Suppose in addition that $G$ is oligomorphic, then $\text{Out}(G) = \text{Aut}(G)/\text{Inn}(G)$ with the quotient topology is totally disconnected, locally compact (t.d.l.c.).
    \end{enumerate}
\end{theorem}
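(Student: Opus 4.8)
The plan is to handle the two parts in sequence: first produce the Polish topology on $\text{Aut}(G)$ and the closedness of $\text{Inn}(G)$ from Roelcke precompactness alone, and only afterwards bring in the oligomorphic hypothesis to upgrade the quotient to a t.d.l.c. group. For part (1) I would fix a decreasing neighbourhood basis $(V_n)_{n\in\omega}$ of the identity by clopen subgroups with $\bigcap_n V_n=\{e\}$, available because $G$ is non-archimedean; Roelcke precompactness then says each double coset space $V_n\backslash G/V_n$ is finite. Since a continuous bijective homomorphism between Polish groups is automatically a homeomorphism (open mapping theorem via Pettis' lemma and Baire category), $\text{Aut}(G)$ is genuinely the group of topological automorphisms, and I would equip it with the coarsest group topology making both evaluation maps $(\alpha,g)\mapsto\alpha(g)$ and $(\alpha,g)\mapsto\alpha^{-1}(g)$ continuous, so that a basic neighbourhood of $\alpha_0$ is given by a finite $F\subseteq G$ and some $V_n$ with $\alpha(g)\in\alpha_0(g)V_n$ and $\alpha^{-1}(g)\in\alpha_0^{-1}(g)V_n$ for $g\in F$. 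To see this is Polish I would embed $\alpha\mapsto(\alpha,\alpha^{-1})$ into $C(G,G)\times C(G,G)$ (self-maps with the topology of pointwise coset-convergence) and check that the image is cut out by the closed conditions saying each coordinate is a homomorphism and that the two coordinates are mutually inverse; Roelcke precompactness is exactly what yields separability and completeness, since topological automorphisms preserve the Roelcke uniformity and hence extend to the compact Roelcke completion $\widehat{G}$, so a coset-Cauchy net of automorphisms cannot escape and its limit is again a bicontinuous bijection.

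For closedness of $\text{Inn}(G)$ I would study the continuous injection $c\colon G/Z(G)\to\text{Aut}(G)$, $g\mapsto(x\mapsto gxg^{-1})$, and prove that a limit of inner automorphisms is inner. Here I would again use compactness of $\widehat{G}$: if $c(g_k)\to\alpha$, then the $g_k$ admit a Roelcke-convergent subnet, and the resulting limit point of $\widehat{G}$ should witness that $\alpha$ is conjugation by a genuine element of $G$, because the relevant double-coset data stabilises under Roelcke precompactness and so no new automorphism can materialise in the limit. This makes $\text{Inn}(G)$ a closed normal subgroup, whence $\text{Out}(G)=\text{Aut}(G)/\text{Inn}(G)$ is Polish in the quotient topology.

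For part (2), total disconnectedness is the easy half: automorphisms permute the lattice of open subgroups, so $\text{Aut}(G)$ is itself non-archimedean; the quotient map is open, the images of open subgroups of $\text{Aut}(G)$ form a neighbourhood basis of open subgroups of $\text{Out}(G)$, and hence $\text{Out}(G)$ is totally disconnected. By van Dantzig's theorem it then suffices to produce one compact open subgroup of $\text{Out}(G)$, equivalently an open $O\le\text{Aut}(G)$ containing $\text{Inn}(G)$ with $O/\text{Inn}(G)$ compact. This is where the oligomorphic hypothesis is indispensable: writing $G=\text{Aut}(M)$ for an $\aleph_0$-categorical $M$ and invoking the Ahlbrandt--Ziegler correspondence between topological automorphisms of $G$ and self-bi-interpretations of $M$, the point is that modulo inner automorphisms the reinterpretation data at each finite level ranges over a finite set, since by Ryll-Nardzewski $M$ has only finitely many $\emptyset$-definable relations of each arity. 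I would realise the subgroup $O$ of automorphisms that are trivial at the base level modulo $\text{Inn}(G)$ as an inverse limit of these finite level-data, hence profinite and compact, while open because fixing finitely many levels is an open condition; this exhibits a compact open subgroup of $\text{Out}(G)$ and completes t.d.l.c.

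The main obstacle is precisely this last step. The group $\text{Aut}(G)$ is itself typically far from locally compact, as it already contains $\text{Inn}(G)\cong G/Z(G)$, which for $G=S_\infty$ is $S_\infty$ and not locally compact; so local compactness is a genuine quotient phenomenon, and the crux is to convert the oligomorphic finiteness into the statement that outer automorphisms, read off level by level modulo inner ones, form a finite set at each stage. Controlling the interaction between the inner automorphisms being quotiented out and this level-by-level finiteness — that is, simultaneously verifying that the candidate $O$ is open, a subgroup, contains $\text{Inn}(G)$, and has compact image — is the part I expect to demand the most care.
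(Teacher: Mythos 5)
This theorem is stated in the paper only as background, quoted from the Nies--Paolini preprint \cite{NP}; the paper contains no proof of it, so there is nothing internal to compare your argument against. Judged on its own terms, your sketch assembles the right ingredients (the coset-permutation topology on $\text{Aut}(G)$, compactness coming from Roelcke precompactness, van Dantzig-style reduction to exhibiting one compact open subgroup, and the Ahlbrandt--Ziegler/Ryll-Nardzewski finiteness for the oligomorphic case), but it has genuine gaps at exactly the two load-bearing points. First, the closedness of $\text{Inn}(G)$ is not proved: the assertion that a Roelcke-convergent subnet of the $g_k$ yields a limit point of $\widehat{G}$ that ``witnesses that $\alpha$ is conjugation by a genuine element of $G$'' does not go through as stated, since the Roelcke completion is not a group and its points do not act on $G$ by conjugation. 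The argument that actually works is combinatorial: for finite $F$ and open $U$ the approximating set $\{g : gxg^{-1}\in\alpha(x)U \text{ for all } x\in F\}$ is invariant under left multiplication by $U\cap\bigcap_{x\in F}\alpha(x)U\alpha(x)^{-1}$ and under right multiplication by a corresponding open subgroup, hence is a nonempty \emph{finite} union of double cosets by Roelcke precompactness; these form an inverse system of nonempty finite sets, whose inverse limit supplies the conjugating element. Your sketch gestures at compactness but does not identify this mechanism, and without it the claim that ``no new automorphism can materialise in the limit'' is just a restatement of what is to be proved.

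Second, for part (2) you correctly isolate the crux --- producing an open subgroup $O\le\text{Aut}(G)$ containing $\text{Inn}(G)$ with $O/\text{Inn}(G)$ compact --- but then explicitly leave it unresolved: the simultaneous verification that the candidate $O$ (automorphisms trivial at the base level modulo inner) is open, is a subgroup, contains $\text{Inn}(G)$, and has compact, i.e.\ profinite, image in $\text{Out}(G)$ is precisely the content of the theorem, and ``the reinterpretation data at each finite level ranges over a finite set modulo inner automorphisms'' is asserted rather than derived from oligomorphicity. (A small additional slip: van Dantzig's theorem is the converse of what you need --- exhibiting one compact open subgroup directly yields local compactness; you do not invoke van Dantzig to reduce to that.) So the proposal is a plausible roadmap that matches the published strategy in outline, but it does not yet constitute a proof of either the closedness of $\text{Inn}(G)$ or the local compactness of $\text{Out}(G)$.
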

\begin{problem}
    If $G$ is oligomorphic, is $\text{Out}(G)$ always profinite?
\end{problem}

\begin{problem}
    Is there a Borel, invariant class $C$ of closed subgroups of $S_\infty$ with isomorphism relation not Borel below graph isomorphism? Analytic complete? How about the class $C$ of pro-countable groups?
\end{problem}

\subsection*{Computability theory}
In \cite[Section 3.1]{LMNS} the authors showed the following implications
$$\text{1-generic } \Delta^0_2 \Rightarrow \text{ index guessable } \Rightarrow \text{ computes no maximal tower } \Rightarrow \text{ low }$$

\begin{problem}
    In the above diagram, are the first and the second implications reversible?
\end{problem}

\begin{bibentry}{9}

\end{bibentry}
}

\section{Nikolay Bazhenov}
%\abstractauthor{Nikolay Bazhenov}
\abstractbody{
Let $x$ be a Turing degree. A computable structure $S$ is $x$-computably categorical if for any computable structure $A \cong S$, there exists an $x$-computable isomorphism $f : A \to S$.

A Turing degree $d$ is the degree of categoricity for $S$ if $d$ is the least degree such that $S$ is $d$-computably categorical.

A computable structure $S$ is decidable if its complete diagram is computable: i.e., given a first-order formula $\psi(\bar{x})$ and a tuple $\bar{a}$ from $S$, one can computably check whether $S \models \psi(\bar{a})$.

A decidable structure $S$ is decidably $x$-categorical if for any decidable structure $A \cong S$, there exists an $x$-computable isomorphism $f : A \to S$.

A degree $d$ is the degree of decidable categoricity for $S$ if $d$ is the least degree such that $S$ is decidably $d$-categorical.

\begin{problem}
    Every degree of decidable categoricity is a degree of categoricity. Is the converse true?

    A conjecture for this problem is that it is true for $d \geq 0^{(\alpha)}$, where $\alpha$ is 'sufficiently large' computable ordinal. Say, for $\alpha = \omega$. What happens with d.c.e. degrees $d$?
\end{problem}

Ceer stands for computably enumerable equivalence relation on $\omega$. A diagonal function (or a fixed-point-free function) for a ceer $E$ is a total function $g(x)$ satisfying $\neg g(x) E x$ for all $x \in \omega$.

\begin{problem}
    The following are three versions of this problem.
    \begin{enumerate}[(i)]
        \item Describe the class \textbf{Diag} containing those Turing degrees $d$ such that every ceer $E \neq \text{Id}_1$ admits a $d$-computable diagonal function.
        Here we have some partial results:
        \begin{enumerate}[(a)]
            \item $\text{\textbf{PA}} \subseteq \text{\textbf{Diag}} \subseteq \text{\textbf{DNR}}$ [Badaev, Bazhenov, Kalmurzayev, Mustafa 2024].
            \item There exists a Martin-Löf random $x$ such that $x \notin \text{\textbf{Diag}}$ [Ng].
        \end{enumerate}
        \item What is the reverse-mathematical strength of the statement “Every $\Sigma_1^0$-definable equivalence relation $E \neq \text{Id}_1$ has a diagonal function”?
        \item What is the Weihrauch degree of the corresponding problem?
    \end{enumerate}
\end{problem}
}

\section{Keng Meng Selwyn Ng}
%\abstractauthor{Keng Meng Selwyn Ng}
\abstractbody{
A right-c.e. metric space $(S, d)$ consists of a countable set $S = \{c_0, c_1, \dots\}$ and $d : \mathbb{N}^2 \to \mathbb{R}$ such that $d(c_i, c_j)$ is a right-c.e. real number, uniformly in $i, j$. Same for a left-c.e. metric space.
\begin{problem}
    Does every (effectively) compact left-c.e. Polish space have a computable (right c.e.) Polish copy? Related to this problem, Melnikov and Ng proved that there is a left c.e. Polish space with no computable Polish presentation. Similarly, Koh, Melnikov, and Ng proved that there is a left c.e. Polish space with no right c.e. Polish presentation.
    As a positive result, Melnikov and Ng proved that every left c.e. Stone space is homeomorphic to a computable Polish space.
\end{problem}

A computable Polish space $X$ is $\alpha$-categorical if for every pair of computable metric spaces $M$, $N$ such that $\bar{M} \cong \bar{N} \cong X$, there is
an $\alpha$-computable homeomorphism between $\bar{M}$ and $\bar{N}$. The least Turing degree $\alpha$ is the degree of (topological) categoricity of $X$.
$X$ is (topologically) computably categorical if it is $0$-categorical.

\begin{problem}
    Is there an infinite computable Polish space that is topologically computably categorical?
\end{problem}
\begin{problem}
    Does the Baire space have a degree of categoricity?
\end{problem}
\begin{problem}
    Is every/any c.e. degree the degree of topological categoricity?
\end{problem}
}

\section{Chong Chi Tat}
%\abstractauthor{Chong Chi Tat}
\abstractbody{
Let $M$ be a model satisfying $\text{RCA}_0 + \text{B}\Sigma_n + \neg \text{I}\Sigma_n$. Fix a $\Pi_n$ subset $X$ of $M$ and assume $\langle A_s : s \in M \rangle$ is a sequence of pairwise disjoint $M$-finite sets such that $\forall s \, (A_s \cap X \neq \emptyset)$.

\begin{problem}
    Is there a definable in $M$ choice function $f$ of $\langle A_s \rangle$, i.e., $f(s) \in A_s \cap X$?
\end{problem}

Tree version: Without loss of generality, let us state the problem for $n=2$.

\begin{problem}
    Consider a tree $T$ which is $\Pi^0_1(\emptyset')$ in $M$. Assume there is a $g \in [T]$ such that $M[g] \models \text{B}\Sigma_2$. Is there a definable path $h \in [T]$ such that $M[h] \models \text{B}\Sigma_2$?
\end{problem}
}

\section{Su Gao}
%\abstractauthor{Su Gao}
\abstractbody{
\subsection*{The graph complement problem}
In 2021, Matt Foreman asked the following question:
\begin{problem}
    Is the set $\{G \in 2^{\omega \times \omega} : G \text{ is a graph on } \omega \text{ and } G \cong G^c\}$ Borel?
\end{problem}
Here $G^c = \{(m,n) \in \omega \times \omega : m \neq n \land (m,n) \notin G\}$ is the complement graph of $G$.

The problem was solved during the workshop.

\begin{theorem}[Feng Li--Ruiwen Li--Ming Xiao; Riley Thornton]
    The set
    $$\{G \in 2^{\omega \times \omega} : G \text{ is a graph on } \omega \text{ and } G \cong G^c\}$$
    is $\boldsymbol{\Sigma}_1^1$-complete.
\end{theorem}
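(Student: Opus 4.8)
The plan is to prove completeness in two standard steps: first the membership bound $\boldsymbol{\Sigma}_1^1$, then hardness by a continuous reduction from a known $\boldsymbol{\Sigma}_1^1$-complete set. For the upper bound, note that ``$G$ is a graph on $\omega$'' (symmetric and irreflexive) is a closed condition, and $G \cong G^c$ asserts the existence of a permutation $\pi \in S_\infty \subseteq \omega^\omega$ with $(m,n) \in G \iff (\pi(m),\pi(n)) \in G^c$ for all $m \neq n$. The displayed condition is Borel in the pair $(G,\pi)$, so the set is the projection of a Borel set and hence $\boldsymbol{\Sigma}_1^1$.

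For hardness I would reduce from graph isomorphism $\mathrm{ISO} = \{(A,B) : A,B \text{ graphs on } \omega,\ A\cong B\}$, which is $\boldsymbol{\Sigma}_1^1$-complete (it is analytic, and $\boldsymbol{\Sigma}_1^1$-hard via the usual reduction from ill-founded trees). This mirrors the classical finite fact that deciding self-complementarity is Graph-Isomorphism-complete; the aim is a continuous map $(A,B) \mapsto G_{A,B}$ with $G_{A,B} \cong G_{A,B}^c \iff A \cong B$. Recall that an isomorphism $G \to G^c$ is exactly an \emph{antimorphism}, a permutation $\sigma$ with $uv \in G \iff \sigma(u)\sigma(v) \notin G$. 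I would build $G_{A,B}$ on four copies (slots) $P_0,P_1,P_2,P_3$ of $\omega$, placing inside the slots the graphs $A, A^c, B, B^c$ respectively, and joining distinct slots by edge sets that are either empty or complete, following the pattern under which the cyclic shift $P_0 \to P_1 \to P_2 \to P_3 \to P_0$ turns each empty join into a complete one. Since empty and complete bipartite joins are invariant under all bijections of each side, a slot-cyclic permutation is an antimorphism precisely when the within-slot complement-transport constraints $W_{i+1} \cong W_i^c$ are simultaneously solvable; with the placement $(A,A^c,B,B^c)$ these constraints close up around the $4$-cycle exactly when $A \cong B$. Thus $A \cong B$ yields the desired antimorphism and $G_{A,B} \cong G_{A,B}^c$, and the map is manifestly continuous (indeed computable), since each edge of $G_{A,B}$ depends on only finitely much of $A,B$.

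The converse — that $G_{A,B} \cong G_{A,B}^c$ forces $A \cong B$ — is the crux and the main obstacle. I must show that \emph{every} antimorphism of $G_{A,B}$, not merely the intended cyclic one, respects the slot partition and permutes the slots cyclically, for only then does its within-slot action yield an isomorphism $A \to B$. The empty/complete join pattern equips each slot with a distinct adjacency type toward the others, which an antimorphism — sending neighborhoods to non-neighborhoods — must permute rigidly; but ruling out slot-mixing permutations and the unwanted symmetries of the $4$-cycle (such as the reflection) will require augmenting each slot with a rigid \emph{anchor} gadget, designed in complementary pairs $D_0,\,D_1 = D_0^c,\dots$ so that the anchors themselves obey the antimorphism flip while pinning down the cyclic order. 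Verifying that such anchors can be made simultaneously rigid, mutually distinguishing, and compatible with the empty/complete frame is where the real work lies; once this rigidity lemma is in place, tracing the within-slot constraints back around the cycle delivers $A \cong B$ and completes the reduction.
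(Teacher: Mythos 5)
Your upper bound is fine, and your overall hardness strategy --- a continuous reduction from graph isomorphism via a four-slot blow-up carrying complementary copies of the input graphs inside the slots and empty/complete joins between them --- is essentially the strategy of the paper's proof; your forward direction also works. But there is a genuine gap exactly where you say ``the real work lies'': you never construct the anchor gadgets, and you never prove the rigidity lemma asserting that every isomorphism $G_{A,B}\cong G_{A,B}^c$ must respect the slot partition and permute the slots cyclically. As written, the converse implication is a plan rather than a proof. Moreover it is not clear the plan goes through for the cyclic frame you chose: in the alternating empty/complete $4$-cycle pattern the slots are not separated by any invariant you exhibit (two slots are completely joined to one other slot, two to two others), and since $A$ and $B$ are arbitrary, an unintended antimorphism could in principle mix slots; the burden of ruling this out is precisely what is left open.

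The paper closes this gap with no auxiliary gadgets by choosing the frame so that graph distance does the work. It places $G, H^c, H^c, G$ on the slots $4\mathbb{N}, 4\mathbb{N}{+}1, 4\mathbb{N}{+}2, 4\mathbb{N}{+}3$ and puts complete joins exactly along the path $P_0\!-\!P_1\!-\!P_2\!-\!P_3$ (all other joins empty). Then every vertex of $P_1\cup P_2$ has eccentricity at most $2$, while each vertex of $P_0\cup P_3$ has some vertex at distance exactly $3$ (namely the opposite endpoint slot), so the endpoint set $P_0\cup P_3$ is definable from the abstract graph by distances; any isomorphism $f:\Phi(G,H)\to\Phi(G,H)^c$ must carry it to the corresponding endpoint set of the complement, and intersecting with the radius-$2$ ball around $0$ (respectively $f(0)$) isolates a copy of $G$ (respectively $H$), giving $G\cong H$. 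That distance argument is the missing rigidity lemma; to repair your proof, either switch to the path frame or supply and verify an analogous intrinsic definition of the slots for your cyclic frame.
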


\begin{proof}
    We define a continuous function $\Phi$ from $\graph^2$ to $\graph$, such that
    \begin{align*}
        (m,n) \in \Phi(G,H) \iff & m = 4k_1, n = 4k_2 \text{ and } (k_1,k_2) \in G; \\
        & m = 4k_1+1, n = 4k_2+1 \text{ and } (k_1,k_2) \in H^c; \\
        & m = 4k_1+2, n = 4k_2+2 \text{ and } (k_1,k_2) \in H^c; \\
        & m = 4k_1+3, n = 4k_2+3 \text{ and } (k_1,k_2) \in G; \\
        & m = 4k_1, n = 4k_2+1; \\
        & m = 4k_1+1, n = 4k_2+2; \\
        & m = 4k_1+2, n = 4k_2+3;
    \end{align*}

    The following figure depicts our definition for $\Phi$:
    \begin{figure}[htbp]
        \centering
        \begin{tikzpicture}
            \draw (0,0) node[left] {$G$} -- (2,0) node[right] {$H^c$} -- (0,-2) node[left] {$H^c$} -- (2,-2) node[right] {$G$};
        \end{tikzpicture}
    \end{figure}

    Now if $G \cong H$, it is easy to see $\Phi(G,H)$ is isomorphic to its complement.

    Conversely, if $\Phi(G,H) \cong \Phi(G,H)^c$, let $f$ be the witness of their isomorphism, and $d_1, d_2$ be the distance function on $\Phi(G,H)$ and $\Phi(G,H)^c$ respectively. Note that in $\Phi(G,H)$, $4\mathbb{N} \cup 4\mathbb{N}+3 = \{x \in \omega : \exists y \, d_1(x,y) = 3\}$. This is similar for $\Phi(G,H)^c$. So $f$ sends $4\mathbb{N} \cup 4\mathbb{N}+3$ to $4\mathbb{N}+1 \cup 4\mathbb{N}+2$. Also note that
    $$\{x \in \omega : \exists y \, (d_1(x,y) = 3) \land d_1(x,0) \leq 2\}$$
    is a copy of $G$,
    $$\{x \in \omega : \exists y \, (d_2(x,y) = 3) \land d_2(x,f(0)) \leq 2\}$$
    is a copy of $H$, and $f$ maps the former set to the latter set. So $G \cong H$.
\end{proof}

Using similar methods, Feng Li and Ruiwen Li also obtained the following:

\begin{theorem}
    The following sets are $\boldsymbol{\Sigma}^1_1$-complete:
    \begin{enumerate}
        \item The set of all countable directed graphs $G$ where $G \cong G^c$;
        \item The set of all countable tournaments $G$ where $G \cong G^c$.
    \end{enumerate}
\end{theorem}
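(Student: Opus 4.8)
The plan is to establish $\boldsymbol{\Sigma}^1_1$-completeness in each case through the two standard ingredients. Membership in $\boldsymbol{\Sigma}^1_1$ is routine: ``$G$ is a directed graph'' (resp. ``$G$ is a tournament'') is an arithmetic condition on $2^{\omega\times\omega}$, while ``$G\cong G^c$'' asserts the existence of a permutation of $\omega$ conjugating $G$ to $G^c$, which is $\boldsymbol{\Sigma}^1_1$. For hardness I would, exactly as in the undirected case, produce a continuous reduction from the graph isomorphism set $\{(G,H)\in\graph^2 : G\cong H\}$, which is $\boldsymbol{\Sigma}^1_1$-complete. Thus everything reduces to building, for each target class, a continuous map $\Phi$ from $\graph^2$ into that class such that $\Phi(G,H)$ is isomorphic to its complement if and only if $G\cong H$.

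For the directed case I would reuse the four-block skeleton of the map $\Phi$ above. Partition $\omega$ by residue mod $4$, place (symmetrized) copies of $G, H^c, H^c, G$ on blocks $0,1,2,3$, and join consecutive blocks by the same complete-bipartite staircase; the one new feature is that every inter-block edge is now oriented, say from the lower-index block to the higher. This orientation turns the block decomposition and the direction of the staircase into genuine isomorphism invariants, recoverable by directed reachability just as $4\mathbb{N}\cup(4\mathbb{N}+3)$ was recovered from the distance-$3$ condition in the undirected proof. Passing to the complement replaces $G,H^c$ by $G^c,H$ inside the blocks and transforms the oriented inter-block pattern into its complement; as in the undirected case the distance/reachability markers survive, and a witnessing isomorphism $f$ must respect the reachability gradient, so it carries the copy of $G$ onto the copy of $H$ and produces $G\cong H$, the converse implication being immediate from any isomorphism $G\to H$.

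The tournament case needs a genuinely new gadget, because here $G^c$ is forced to be the \emph{converse} tournament (reversing every arc), so ``$G\cong G^c$'' means ``$G$ is isomorphic to its reverse.'' My plan is to encode $G$ and $H$ into the two halves of a tournament built over a rigid linearly ordered backbone together with marker vertices of pairwise distinct out-degree, completing all remaining unordered pairs so that every pair of distinct vertices receives exactly one arc. The backbone and markers should be designed so that any isomorphism to the converse must fix the backbone setwise and match the $G$-half with the image of the $H$-half; since reversing arcs turns the encoded $H^c$ into $H$, such an isomorphism exists precisely when $G\cong H$. Continuity of both maps is automatic, since each output bit depends on finitely many input bits.

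The main obstacle, in both constructions, is the backward direction: verifying that \emph{every} self-complementarity isomorphism decomposes along the intended structure rather than exploiting some unforeseen symmetry. In the undirected argument this rigidity came for free from the distance-$3$ definability of the block $4\mathbb{N}\cup(4\mathbb{N}+3)$. I expect the directed case to be easier, since orientation strictly increases rigidity, but the tournament case to be delicate, because the complement is now a global arc-reversal: one must simultaneously (i) keep the skeleton rigid enough that the reversal-isomorphism cannot permute the two halves freely and (ii) check that completing the non-edges into tournament arcs introduces no spurious isomorphisms that would make $\Phi(G,H)$ self-converse even when $G\not\cong H$. Pinning down an explicit backbone-plus-marker gadget that provably satisfies both (i) and (ii) is where the real work lies.
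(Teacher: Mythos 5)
Your overall strategy (continuous reduction from graph isomorphism, adapting the four-block construction) is the intended one, and the $\boldsymbol{\Sigma}^1_1$ upper bounds are routine as you say. But both hardness arguments have concrete problems. In the directed case, the modification you propose---symmetric blocks but every staircase edge oriented from the lower block to the higher---breaks the reduction in the \emph{forward} direction. A digraph isomorphism must carry two-way adjacent pairs to two-way adjacent pairs and non-adjacent pairs to non-adjacent pairs. In your $\Phi(G,H)$ the symmetric part has no edges between distinct blocks, so it is disconnected; but in $\Phi(G,H)^c$ the block pairs $(0,2)$, $(0,3)$, $(1,3)$, which were completely non-adjacent in $\Phi(G,H)$, become completely two-way adjacent, so the symmetric part of the complement is connected. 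Hence $\Phi(G,H)\not\cong\Phi(G,H)^c$ for \emph{all} $G,H$, including when $G\cong H$. The irony is that no orientation is needed for part (1): the undirected $\Phi$ of the preceding proof, read as a symmetric irreflexive relation, already lands in the class of directed graphs, its complement is again symmetric, and digraph isomorphisms between symmetric digraphs are exactly graph isomorphisms, so part (1) follows verbatim from the undirected theorem.

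For the tournament case you correctly identify the key point that $G^c$ is the converse tournament, so the target set is the class of self-converse tournaments, and that a genuinely new gadget is required; but you then stop at ``pinning down an explicit backbone-plus-marker gadget \dots is where the real work lies,'' which concedes that the proof of part (2)---the only part not already implied by the undirected case---is not given. The one concrete ingredient you do mention, marker vertices of pairwise distinct out-degree, is also dubious in an infinite tournament: to be distinguishing the out-degrees must be finite, arc reversal swaps out-degree with in-degree (so each marker must be matched with a vertex of equal in-degree), and the step of ``completing all remaining unordered pairs'' into arcs tends to destroy exactly such local degree invariants. Until a specific tournament construction is written down and its rigidity verified, part (2) remains unproved.
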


A directed graph $G$ is a \emph{tournament} if for any distinct vertices $x, y \in G$, exactly one of $(x,y)$ and $(y,x)$ is an edge in $G$.

\subsection*{Some hyperfiniteness problems}
The following is the well-known Union Problem in the theory of hyperfinite equivalence relations.

\begin{problem}[The union problem]
    If $E$ is a countable Borel equivalence relation and $E = \cup_n F_n$, where $F_n$ is hyperfinite and $F_n \subseteq F_{n+1}$ for each $n \in \omega$, is it true that $E$ is hyperfinite?
\end{problem}

The following theorem was recently proved.

\begin{theorem}[Frisch--Shinko--Vidnyánszky \cite{FSV}]
    If there is a counterexample to the union problem, then the set of all hyperfinite equivalence relations is $\mathbf{\Sigma}_2^1$-complete.
\end{theorem}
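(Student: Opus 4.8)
The plan is to prove the two halves separately: that the collection $\mathsf{HYP}$ of hyperfinite countable Borel equivalence relations (CBERs) is $\mathbf{\Sigma}_2^1$ \emph{unconditionally}, and that the existence of a counterexample to the union problem upgrades this to $\mathbf{\Sigma}_2^1$-\emph{hardness}. For the upper bound I would unwind the definition: $E$ is hyperfinite iff there is an increasing sequence $(F_n)$ of finite Borel equivalence relations with $\bigcup_n F_n = E$ (equivalently, for CBERs, iff $E \leq_B E_0$). Coding such a sequence, or a Borel reduction to $E_0$, by a single real $c$, the assertions ``$c$ codes an increasing sequence of finite Borel equivalence relations'' and ``their union equals $E$'' are each $\mathbf{\Pi}_1^1$ in $c$; prefixing the real quantifier $\exists c$ yields a $\mathbf{\Sigma}_2^1$ definition. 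Thus $\mathsf{HYP} \in \mathbf{\Sigma}_2^1$ with no hypothesis, and the counterexample is needed only to realize the lower bound.

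For hardness I would fix a counterexample $E^\ast = \bigcup_n F_n$ (a non-hyperfinite CBER that is an increasing union of hyperfinite, indeed finite, Borel pieces) and reduce the $\mathbf{\Sigma}_2^1$-complete set $A = \{T : \exists y\ (T_y \text{ is well-founded})\}$, where $T$ ranges over trees on $\omega \times \omega$ and $T_y$ is the $y$-section. The goal is a Borel map $T \mapsto E_T$ with $E_T$ hyperfinite iff $T \in A$, built so that $E_T = \bigcup_n G_n^T$ is an increasing union of finite Borel equivalence relations whose ``amount of merging'' at each level is driven by $T$. The design has two regimes. If every section of $T$ is ill-founded, then infinitely many levels survive along an infinite branch and a faithful copy of $E^\ast$ embeds as a sub-relation on an $E_T$-invariant Borel set, forcing $E_T \notin \mathsf{HYP}$, exactly because $E^\ast$ is the non-hyperfinite limit object supplied by the hypothesis. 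If some section $T_y$ is well-founded, the associated Borel ordinal rank should reorganize the union, bounding the merging in a rank-controlled fashion and certifying $E_T \in \mathsf{HYP}$. Note that a naive disjoint union of ``$E^\ast$ truncated by $T_y$'' over all $y$ is hyperfinite iff \emph{every} section is well-founded, which is merely $\mathbf{\Pi}_1^1$; so the reduction cannot be a disjoint union, and the single witnessing $y$ must act as a \emph{global} certificate rather than a fiberwise one.

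The main obstacle is precisely this last point: engineering $E_T$ so that one well-founded section suffices to make the \emph{entire} relation hyperfinite, even while other sections remain ill-founded. This is where a well-founded section must be leveraged as a transfinite rank that yields a Borel reduction of all of $E_T$ to $E_0$, taming the increasing union globally; simultaneously one must verify that in the ``all sections ill-founded'' case the embedded copy of $E^\ast$ really does sit on an invariant Borel set and genuinely destroys hyperfiniteness (rather than being absorbed into a hyperfinite overrelation), and that the whole assignment $T \mapsto E_T$ is Borel-uniform and lands among CBERs. The role of the hypothesis is structural and unavoidable here: without a counterexample the ``bad'' regime would itself be hyperfinite, the map $T \mapsto E_T$ would be essentially constant into $\mathsf{HYP}$, and no hardness could follow — so the counterexample is exactly the non-hyperfinite seed that separates the two regimes and encodes the existential quantifier.
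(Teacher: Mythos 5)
First, a caveat: the paper you are working from does not actually prove this theorem; it is quoted as a black box from the Frisch--Shinko--Vidny\'anszky preprint \cite{FSV}, so there is no in-paper argument to compare yours against. Judged on its own, your proposal gets the easy half right --- the unconditional $\mathbf{\Sigma}_2^1$ upper bound via an existential real quantifier over codes (e.g.\ codes for a Borel reduction to $E_0$) followed by a $\mathbf{\Pi}_1^1$ matrix is standard and correct --- but the hardness half contains both an error and a gap.

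The error: you describe the counterexample as ``a non-hyperfinite CBER that is an increasing union of hyperfinite, \emph{indeed finite}, Borel pieces,'' and later you want $E_T = \bigcup_n G_n^T$ to be ``an increasing union of finite Borel equivalence relations.'' An increasing union of \emph{finite} Borel equivalence relations is hyperfinite \emph{by definition}; a counterexample to the union problem is necessarily an increasing union of hyperfinite pieces that are not all finite, and if your $G_n^T$ really were finite then $E_T$ would be hyperfinite for every $T$ and the reduction would be constant. This is not just a slip of terminology, because the distinction between ``union of finite'' and ``union of hyperfinite'' is the entire subject of the theorem. The gap: the whole mathematical content of the hardness direction is the mechanism by which a single well-founded section $T_y$ acts as a \emph{global} certificate of hyperfiniteness for $E_T$ while ill-founded sections elsewhere do not spoil it --- you correctly identify this as ``the main obstacle'' and correctly observe that a fiberwise/disjoint-union construction only captures a $\mathbf{\Pi}_1^1$ condition, but you then merely assert that ``a well-founded section must be leveraged as a transfinite rank that yields a Borel reduction of all of $E_T$ to $E_0$'' without giving any construction that does this. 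As written, the proposal is a correct statement of what a proof would have to accomplish, not a proof; the reduction $T \mapsto E_T$ and the verification of both regimes would need to be supplied before this could be considered an argument for the theorem.
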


Analogous to the above, we call a countable Borel equivalence relation \emph{hyperfinite-over-hyperfinite} (or shortly, \emph{hf/hf}) if there is a Borel partial order $\leq$ on $X$ such that
\begin{enumerate}[(i)]
    \item if $x \leq y$, then $x E y$,
    \item the order type of $\leq \upharpoonright_{[x]}$ is a suborder of $\mathbb{Z}^2$.
\end{enumerate}

\begin{problem}[The hf/hf problem]
    If $E$ is hf/hf, is it hyperfinite?
\end{problem}

There is an equivalent characterization for hyperfiniteness of hf/hf equivalence relation:
\begin{theorem}[Gao--Xiao \cite{GX}]
    If $E$ is hf/hf, then $E$ is hyperfinite iff $E$ admits a $\mathbb{Z}^2$ ordering which is self-compatible.
\end{theorem}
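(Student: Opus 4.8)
The plan is to prove the two implications separately, using as the common tool the standard characterization that a countable Borel equivalence relation $E$ is hyperfinite exactly when $E=\bigcup_n F_n$ for an increasing sequence of finite Borel equivalence relations $F_n$, equivalently when $E$ carries a Borel assignment of a linear ordering of each class of order type a suborder of $\mathbb{Z}$. The whole difficulty is to produce, or to exploit, such data in a Borel and $E$-invariant way, since there is no Borel transversal to anchor the construction to a canonical point in each class.

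For the direction ``self-compatible $\mathbb{Z}^2$ ordering $\Rightarrow$ hyperfinite,'' I would take the self-compatible ordering $\preceq$ and run a marker/tiling construction on each class, in the spirit of the proof that free Borel $\mathbb{Z}^2$-actions are hyperfinite. First I would read off from $\preceq$ the two commuting ``horizontal'' and ``vertical'' shift directions, noting that the $\mathbb{Z}^2$-coordinate differences between two $E$-equivalent points are well defined independently of any basepoint. Then I would build a Borel sequence of nested finite ``rectangular'' blocks exhausting each class, and let $F_n$ identify points lying in a common block at stage $n$. The point of self-compatibility is precisely to guarantee that the two directions are aligned well enough that these rectangles can be chosen coherently, i.e.\ nested and cofinal, across scales; without it, the naive ``bounded coordinate difference'' relation fails to be transitive and the blocks need not exhaust the class. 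Verifying finiteness, Borelness, and monotonicity of the $F_n$ is then routine, and $E=\bigcup_n F_n$ yields hyperfiniteness.

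For the converse, ``$E$ hyperfinite $\Rightarrow$ a self-compatible $\mathbb{Z}^2$ ordering exists,'' I would combine the ambient hf/hf ordering $\leq$ (which exists by hypothesis) with a Borel witness $\sqsubseteq$ of hyperfiniteness, i.e.\ a locally finite $\mathbb{Z}$-ordering refining $E$. The idea is to use the finite initial segments ($\sqsubseteq$-balls) as a ready-made coherent exhaustion and to re-coordinatize the $\mathbb{Z}^2$-structure of $\leq$ so that its rectangular windows are pinned to these balls. Concretely, I would keep the two $\mathbb{Z}^2$-directions coming from $\leq$ but re-index them against $\sqsubseteq$ so that every finite $\preceq$-window sits inside a $\sqsubseteq$-ball and vice versa, which is exactly the alignment that self-compatibility demands. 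The conceptual content is that hyperfiniteness supplies the coherent finite exhaustion that a generic $\mathbb{Z}^2$ ordering may lack, and self-compatibility is the record of that exhaustion.

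The hard part will be the Borel bookkeeping in both directions, and it splits into two distinct cruxes. In the forward direction, the obstacle is that a suborder of $\mathbb{Z}^2$ need not be a genuine (total) $\mathbb{Z}^2$-action, so the tiling argument must be carried out with partial shifts; showing that self-compatibility makes the partial structure ``full enough'' for the rectangles to tile each class is where the real work lies, and it is exactly the step that a non-self-compatible ordering defeats. In the converse, the delicate point is arranging that the $\sqsubseteq$-driven blocks are honestly rectangular with respect to the $\mathbb{Z}^2$-order, so that the output is a bona fide self-compatible $\mathbb{Z}^2$ ordering rather than merely some locally finite linear order. I expect the forward direction's cofinality/tiling step to be the main obstacle overall.
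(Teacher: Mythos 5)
This theorem is stated in the paper only as a citation to Gao--Xiao \cite{GX}; no proof appears in the source, and crucially neither does the definition of ``self-compatible.'' That is where your proposal breaks down: both directions of your argument treat self-compatibility as a black box that ``guarantees the two directions are aligned well enough'' (forward direction) or ``is the record of the exhaustion'' (converse), but you never state the property you are assuming or producing. Since the entire mathematical content of the equivalence lives in that definition, an argument that invokes it only through the role it is supposed to play is circular: you are substituting the desired consequence of the definition for the definition itself. A reviewer cannot check that your rectangles ``can be chosen coherently'' or that your re-indexed ordering ``is a bona fide self-compatible $\mathbb{Z}^2$ ordering'' without knowing what is actually being verified.

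There are two further concrete gaps. First, in the forward direction you assert that the $\mathbb{Z}^2$-coordinate differences between $E$-equivalent points ``are well defined independently of any basepoint.'' For an hf/hf relation the hypothesis is only that each class carries a partial order whose \emph{order type} is a suborder of $\mathbb{Z}^2$; the embedding into $\mathbb{Z}^2$ is not part of the data and is far from unique, so there are no canonical coordinates to subtract. Producing a coherent, Borel, basepoint-independent coordinatization is precisely the hard step (and presumably precisely what self-compatibility buys), so assuming it at the outset begs the question. Second, the proposal explicitly defers the substantive work in both directions (``the hard part will be the Borel bookkeeping,'' ``I expect the forward direction's cofinality/tiling step to be the main obstacle''); the marker/tiling argument for free Borel $\mathbb{Z}^2$-actions is already a nontrivial construction, and adapting it to partial suborders without a group action is essentially the whole theorem. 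As written, this is a plausible research plan rather than a proof; to make it one you would need to import the actual definition of self-compatibility from \cite{GX} and carry out the tiling and re-coordinatization constructions in detail. You might also check whether, in the converse direction, the degenerate $\mathbb{Z}^2$ ordering obtained by placing the $\mathbb{Z}$-ordering witnessing hyperfiniteness along a single axis is already self-compatible; if the definition allows this, your two-dimensional re-indexing is unnecessary.
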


Inspired by the Frisch--Shinko--Vidnyánszky theorem, we ask:

\begin{problem}
    Suppose there is a counterexample to the hf/hf problem. Is the set of all hyperfinite equivalence relations $\mathbf{\Sigma}_2^1$-complete?
\end{problem}

\begin{bibentry}{9}

\end{bibentry}
}

\section{Jing Yu}
%\abstractauthor{Jing Yu}
\abstractbody{
\begin{problem}[Weiss' question]
    Is the orbit equivalence relation of Borel action of countable amenable group hyperfinite?
\end{problem}

The orbit equivalence relations of actions of the following groups are proved to be hyperfinite:
\begin{quote}
    $\mathbb{Z}$ (Slaman--Steel),

    $\mathbb{Z}^n$ (Weiss),

    Groups of polynomial growth (Jackson--Kechris--Louveau),

    Abelian groups (Gao--Jackson),

    Locally nilpotent groups (Seward--Schneider),

    Polycyclic groups (Conley--Jackson--Marks--Seward--Tucker-Drob).
\end{quote}

Also the connected equivalence relations of the following graphs are proved to be hyperfinite:
\begin{quote}
    Graphs of polynomial growth (Bernshteyn--Yu),

    Graph of growth less than $\exp(r^c)$ for some small enough $0<c<1$ (Grebík--Marks--Rozhoň--Shinko).
\end{quote}

\begin{problem}
    How about graphs of uniform subexponential growth, i.e., of growth less than $\exp(r^c)$ for some $0<c<1$?
\end{problem}
}

\section{Riley Thornton}
%\abstractauthor{Riley Thornton}
\abstractbody{
Let $\text{Aut}([0,1],\lambda)$ denote the space of all automorphisms of the Lebesgue measure with the weak topology. Consider actions of $F_\infty$ on $[0,1]$ by automorphisms of Lebesgue measure.
Then the space of all such actions, $\text{Act}(F_\infty,([0,1],\lambda))$, is a closed subspace of $\text{Aut}([0,1],\lambda)^{F_\infty}$. 

For $x\in \text{Act}(F_\infty, ([0,1],\lambda))$, let $\mbox{Sch}(x)$ denote the Schreier graph of the action on $[0,1]$.

\begin{problem}
   Is the set
   $$\{x \in \text{Act}(F_\infty,([0,1],\lambda)) : \text{Sch}(x) \text{ has a measurable perfect matching}\}$$ 
   $\mathbf{\Sigma}_1^1$-complete?
\end{problem}

\begin{problem}[Halmos, 1956]
    What $T \in \text{Aut}([0,1],\lambda)$ has $S \in \text{Aut}([0,1],\lambda)$ with $T = S^2 = S \circ S$?

    Conjecture: $\{T \in \text{Aut}([0,1],\lambda) : \exists S \in \text{Aut}([0,1],\lambda) \, T = S^2\}$ is $\mathbf{\Sigma}_1^1$-complete.
\end{problem}
}

\section{Wei Dai}
%\abstractauthor{Wei Dai}
\abstractbody{
\begin{problem}
    If $\vec{G}$ is a Borel locally finite directed graph whose in-degrees are uniformly bounded and out-neighborhoods have polynomial growth, is the connected equivalence relation of $\vec{G}$ hyperfinite?
\end{problem}

\begin{problem}
    Let $\alpha$ be a free, probability measure preserving (p.m.p.) and ergodic action of $\mathbb{F}_2$ on measure space $(X,\mu)$.
    \begin{enumerate}[(i)]
        \item If $e \neq x \in \mathbb{F}_2$, is there a free, p.m.p action $\beta$ such that $E_\alpha = E_\beta$ and $x$ acts ergodically?
        \item (Miller--Tserunyan). Is there a free, p.m.p action $\beta$ such that every nontrivial element of $\mathbb{F}_2$ acts ergodically?
    \end{enumerate}
\end{problem}
}

\section{Jialiang He}
%\abstractauthor{Jialiang He}
\abstractbody{
A maximal eventually different family (MED) is a family $\mathlarger{\mathlarger{\varepsilon}} \subseteq \omega^\omega$ such that:
\begin{enumerate}[(i)]
    \item $\forall f \neq g \in \mathlarger{\mathlarger{\varepsilon}} \, |f \cap g| < \infty$ and
    \item $\forall f \in \omega^\omega \exists g \in \mathlarger{\mathlarger{\varepsilon}} \, |f \cap g| = \infty$.
\end{enumerate}

\begin{theorem}
    There exists a closed MED.
\end{theorem}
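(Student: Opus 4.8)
The plan is to build the family explicitly and to reduce the two requirements---pairwise eventual difference and maximality---to a single combinatorial core by a change of coordinates. First I would recast the definition: writing each $f$ as its graph $\{(n,f(n))\}$, an MED family is an almost disjoint family of graphs of functions which is maximal in the sense that every $g\in\omega^\omega$ meets some member infinitely often. The contrast with Mathias's theorem (there is no closed MAD family of subsets of $\omega$) signals that closedness must be engineered very carefully; the extra structure we get to exploit is that a graph of a function meets every column exactly once.

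The main device is a block reduction. Partition $\omega=\bigsqcup_k I_k$ into consecutive finite intervals with $m_k:=|I_k|\to\infty$, let $k(n)$ be the block of $n$, and fix bijections $\pi_n\colon\omega\to\omega$. To a sequence $a=(a_k)\in\omega^\omega$ assign $f_a$ by $f_a(n)=\pi_n(a_{k(n)})$. On block $k$ the functions $f_a,f_b$ agree everywhere if $a_k=b_k$ and nowhere if $a_k\neq b_k$; hence $f_a,f_b$ are eventually different iff $a,b$ are, and the map $a\mapsto f_a$ is a homeomorphism onto a closed subset of $\omega^\omega$ (membership in the image is the coordinatewise condition $\pi_n^{-1}(f(n))=\pi_{n'}^{-1}(f(n'))$ whenever $n,n'$ lie in a common block). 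Finally $f_a$ meets a given $g$ on block $k$ iff $a_k\in B_k^g:=\{\pi_n^{-1}(g(n)):n\in I_k\}$, and as $g$ varies these $B_k^g$ range over all nonempty subsets of $\omega$ of size at most $m_k$, independently across $k$. So $\{f_a:a\in\mathcal A\}$ is a closed MED family precisely when $\mathcal A\subseteq\omega^\omega$ is a closed eventually different family that \emph{hits every slalom}: for every sequence $(B_k)$ with $1\le|B_k|\le m_k$ there is $a\in\mathcal A$ with $a_k\in B_k$ for infinitely many $k$.

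It remains to construct such an $\mathcal A$, which I would present as the branches $[S]$ of a tree $S\subseteq\omega^{<\omega}$ built recursively, with the block sizes $m_k$ chosen to grow as fast as the bookkeeping demands. Eventual difference for $[S]$ amounts to the statement that any two branches collide (take equal values at a common level) only finitely often. Maximality amounts to the statement that the branches, collectively, can be steered into every fat target set infinitely often. I would secure the second condition by arranging that from every node and every nonempty $B\subseteq\omega$ the tree admits an extension whose next value lies in $B$, and then running a fusion argument that, given a slalom $(B_k)$, threads a single branch diverting into $B_k$ at infinitely many levels. I would secure the first by a calibrated collision budget: assign to each node a large, spread-out menu of admissible values whose pairwise overlaps across distinct nodes of the same level are so controlled that along any two branches the total number of coincidences stays finite.

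The main obstacle is exactly the tension between these two demands. The naive way to force eventual difference---letting a branch's value encode its entire prefix, so that distinct branches use disjoint value ranges---also \emph{confines} each branch to a thin set of values and thereby destroys maximality, since an adversarial slalom can live entirely outside that range. So the construction must keep the members \emph{non-confined}, ranging widely enough to enter arbitrary fat targets, while still guaranteeing only finitely many pairwise collisions. Reconciling these is where the real work lies, and it is precisely where the growth $m_k\to\infty$ of the block sizes is spent: fat targets give enough room at infinitely many levels to divert one branch into $B_k$ without ever re-colliding with the others. Making the fusion-and-budget argument serve every pair of branches and every slalom simultaneously is the technical heart; an alternative, should the direct construction prove unwieldy, is to follow Horowitz and Shelah's explicit coding, in which members simultaneously encode a prefix (for eventual difference) and a decodable guessing procedure (for maximality).
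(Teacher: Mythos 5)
The paper itself offers no proof of this theorem: it is recorded in this open-problems collection as a known result (it is due to Horowitz and Shelah), so your proposal can only be judged on its own terms, and there it has a genuine gap. The block reduction in your second paragraph is correct and cleanly done: a closed MED family exists iff there is a closed, pairwise eventually different $\mathcal A\subseteq\omega^\omega$ meeting every slalom $(B_k)$ with $1\le|B_k|\le m_k$ infinitely often. But this is a reformulation rather than a reduction to anything easier: taking singleton slaloms $B_k=\{g(k)\}$, the required $\mathcal A$ must in particular be a closed eventually different family such that every $g\in\omega^\omega$ is \emph{infinitely equal} to some member --- a statement at least as strong as the theorem you set out to prove. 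All of the difficulty is therefore concentrated in your third paragraph, which is a plan rather than a proof, and the plan as stated does not cohere. The condition ``from every node and every nonempty $B\subseteq\omega$ the tree admits an extension whose next value lies in $B$,'' applied to singletons, forces every node of $S$ to have all of $\omega$ among its immediate successor values, i.e., $S=\omega^{<\omega}$, which is incompatible with $[S]$ being an eventually different family. Conversely, the ``calibrated collision budget'' of spread-out menus with controlled pairwise overlap is exactly the confinement you warn against in your own fourth paragraph: if the admissible values at the level-$k$ nodes are thin enough to control collisions, an adversarial singleton slalom can be chosen to avoid all of them at every level.

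The tension you identify is real and is the actual content of the theorem. Resolving it requires the value $a_k$ of a branch to encode simultaneously its own history (to force eventual difference) and a decodable guessing device that tracks an arbitrary $g$ (to force maximality); this is precisely the Horowitz--Shelah coding, which you invoke only as a fallback rather than carry out. Until that construction (or a variant such as Schrittesser's) is actually executed, your argument establishes only that the theorem is equivalent to a statement of the same strength, not the theorem itself.
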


(Gao asked if there is a $\Pi^0_1$ MED.)

Let $I$ be an ideal on $\omega$. We can define $I$-MED analogous to the above definition. A family $\mathlarger{\mathlarger{\varepsilon}} \subseteq \omega^\omega$ is called an $I$-MED if:
\begin{enumerate}[(i)]
    \item For all $f \neq g \in \mathlarger{\mathlarger{\varepsilon}}$, $\{n : f(n) = g(n)\} \in I$ and
    \item $\forall h \in \omega^\omega \exists f \in \mathlarger{\mathlarger{\varepsilon}} \, \{n : f(n) = g(n)\} \in I^+$.
\end{enumerate}

\begin{theorem}
    If $I$ is an $F_\sigma$ ideal or $I = \text{Fin}^\alpha$, $\alpha < \omega_1$, then there exists a closed $I$-MED.
\end{theorem}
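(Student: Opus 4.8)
The plan is to reformulate the problem in terms of \emph{slices} and then handle the two families of ideals separately, in both cases bootstrapping from the case $I=\text{Fin}$, which is precisely the preceding theorem (a closed MED is a closed $\text{Fin}$-MED). Given a candidate closed family, identify each member $F$ with the sequence of its slices relative to the natural product structure on the domain of $I$, and read off what conditions (i)--(ii) of an $I$-MED demand of these slices. The recurring tension is between (i), which forces distinct members to \emph{diverge} (agreement in $I$), and (ii), which forces the family to be \emph{rich} enough to meet every target $h$ on an $I$-positive set, all while the family stays closed; this is exactly the difficulty already present, and presumably resolved, in the base theorem.

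For $I=\text{Fin}^\alpha$ I would argue by transfinite induction on $\alpha<\omega_1$, the base case $\alpha=1$ being given. At a limit $\lambda$, write $\text{Fin}^\lambda=\bigoplus_n \text{Fin}^{\alpha_n}$ on $\bigsqcup_n S_{\alpha_n}$ with $\alpha_n\nearrow\lambda$, so positivity means positivity on some summand. Fix closed $\text{Fin}^{\alpha_n}$-MEDs $\mathcal C_n$ (induction) and continuous injections $\iota_n:\mathcal C_0\to\mathcal C_n$ (with $\iota_0=\mathrm{id}$), which exist since each $\mathcal C_n$ is an uncountable closed subset of a zero-dimensional Polish space; take the \emph{closed} family $\mathcal C=\{\vec b\in\prod_n\mathcal C_n:\forall n\;b_n=\iota_n(b_0)\}$. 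Distinct members differ in \emph{every} coordinate (each $\iota_n$ is injective), giving (i) on every summand, and for any target $H$, choosing $b_0$ to be a good response in $\mathcal C_0$ to $H\upharpoonright S_{\alpha_0}$ makes that member positive on the $0$-th summand, giving (ii); anchoring on one coordinate suffices precisely because limit-positivity needs only one summand. The successor step $\alpha\to\alpha+1$, where $\text{Fin}^{\alpha+1}=\text{Fin}\otimes\text{Fin}^\alpha$ on $\omega\times S_\alpha$, is harder because positivity now needs infinitely many good coordinates. Here a member is a sequence of slices $\langle F(n,\cdot)\rangle_n$; if each slice lies in a closed $\text{Fin}^\alpha$-MED $\mathcal B$, then by the pairwise property of $\mathcal B$ condition (i) reduces to the slice-sequences being \emph{eventually different} as $\mathcal B$-valued sequences, while (ii) reduces to hitting, infinitely often, the nonempty set of good responses to $H(n,\cdot)$ guaranteed by maximality of $\mathcal B$. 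Thus the successor case reduces to constructing a \emph{closed MED over the alphabet $\mathcal B$}.

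For an $F_\sigma$ ideal, invoke Mazur's theorem to write $I=\{A:\phi(A)<\infty\}$ for a lower semicontinuous submeasure $\phi$, and assume $\omega\notin I$ (otherwise no $I$-MED exists), so $\phi(\omega)=\infty$. By lower semicontinuity choose consecutive finite blocks $J_0<J_1<\cdots$ partitioning $\omega$ with $\phi(J_k)\to\infty$. Two facts then drive the argument: by monotonicity $\phi(\bigcup_{k\in X}J_k)\ge\sup_{k\in X}\phi(J_k)=\infty$ for every infinite $X$, so an agreement set containing infinitely many whole blocks is $I$-positive; and by countable subadditivity $\phi(\bigcup_k S_k)\le\sum_k\phi(S_k)$, so an agreement set whose per-block pieces have summable $\phi$-mass lies in $I$. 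It therefore suffices to build a closed family of block-slice sequences $\langle u_k\rangle\in\prod_k\omega^{J_k}$ that are eventually \emph{completely} different (for large $k$, $u_k$ and $u'_k$ share no coordinate, so total agreement meets only finitely many blocks) and such that for every target $\langle v_k\rangle$ some member has $u_k=v_k$ for infinitely many $k$.

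The main obstacle is the common core of all three branches: constructing a \emph{closed} family whose members pairwise diverge in the relevant sense yet which is maximal — that is, a closed MED over an enriched alphabet (the uncountable $\mathcal B$ in the successor step, or the finite products $\omega^{J_k}$ under the ``completely different'' relation in the $F_\sigma$ step). I expect to resolve this by lifting the base-case construction to the tree level: represent the alphabet as the branches of a tree ($T_B$ with $[T_B]=\mathcal B$, respectively the block trees), and run the closed-$\text{Fin}$-MED construction with the countable branching of $\omega$ replaced by the branching supplied by that tree, using perfectness of $\mathcal B$ (respectively the growing block sizes) to guarantee, at each splitting level, enough pairwise-divergent continuations to realize maximality while splitting cleanly enough to keep agreement $I$-small. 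Verifying that this lifting preserves closedness and maximality \emph{simultaneously} — especially over the uncountable alphabet $\mathcal B$, where eventual difference must be enforced continuously along a closed set — is the step I expect to demand the most care.
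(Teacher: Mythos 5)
The paper states this theorem without proof (it appears in an open-problems collection), so there is no official argument to compare yours against; the proposal has to stand on its own, and as written it does not yet constitute a proof. The decisive gap is one you name yourself: all three branches (the successor step for $\text{Fin}^{\alpha+1}$, the limit step as you set it up, and the $F_\sigma$ case) are reduced to constructing a \emph{closed} family over an enriched alphabet that is simultaneously pairwise eventually (completely) different and maximal, and this construction is never carried out — you only express the expectation that the base-case construction ``lifts.'' That expectation is far from automatic. In the successor step the set of good responses $G_n=\{g\in\mathcal B:\{m:g(m)=h_n(m)\}\notin\text{Fin}^\alpha\}$ is merely nonempty and Borel (typically not closed), so arranging that a closed family of $\mathcal B$-valued slice sequences meets infinitely many of the $G_n$ for \emph{every} target $h$, while distinct members are eventually different in $\mathcal B$, is exactly the simultaneous closedness-versus-maximality tension that makes even the base theorem nontrivial; asserting that it survives passage to an uncountable alphabet is the theorem, not a reduction of it. The same applies to the block version in the $F_\sigma$ case, where the per-block relation is ``completely different,'' strictly stronger than ``different,'' so the base construction does not transfer verbatim there either.

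Separately, your limit-stage argument rests on reading $\text{Fin}^\lambda=\bigoplus_n\text{Fin}^{\alpha_n}$ so that a set is positive as soon as one summand is positive. Under the standard iterated-Fubini convention, $A\in\text{Fin}^\lambda$ iff all but finitely many sections $A\cap S_{\alpha_n}$ lie in $\text{Fin}^{\alpha_n}$, so positivity requires infinitely many positive summands, and anchoring the single coordinate $b_0$ does not verify condition (ii). With that convention the limit case has the same structure as the successor case (one must hit good responses on infinitely many summands) and so inherits the unresolved core construction rather than bypassing it. The $F_\sigma$ preprocessing via Mazur's theorem and blocks with $\phi(J_k)\to\infty$ is sound as far as it goes, but overall this is a plausible strategy outline rather than a proof.
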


\begin{problem}
    For all Borel $I$, is there a closed $I$-MED?
\end{problem}

\begin{problem}
    For maximal ideal $I$, is there a closed $I$-MED?
\end{problem}

For these problems, it is enough to find a Borel $I$-MED.
\begin{theorem}
    For any ideal $I$, if there is a Borel $I$-MED, then there is a closed $I$-MED.
\end{theorem}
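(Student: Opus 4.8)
The plan is to manufacture the closed family by \emph{coding}, transporting the given Borel $I$-MED $\varepsilon$ onto a closed set while arranging that its two defining features are inherited for completely separate reasons: eventual difference modulo $I$ from the values carried by $\varepsilon$, and maximality from the maximality of $\varepsilon$ against a suitably relabelled target. First I would invoke the standard fact that every Borel subset of $\omega^\omega$ is the injective continuous image of a closed set: fix a closed $F\subseteq\omega^\omega$ and a continuous injection $\phi\colon F\to\omega^\omega$ with $\phi[F]=\varepsilon$. The point of injectivity is that each member $\phi(x)$ carries a recoverable ``witness'' $x\in F$; storing this witness continuously is what will force the coded family to be closed, since recovering $x$ and then checking $\phi(x)$ is a closed condition, whereas $\varepsilon$ itself is only Borel.

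Assuming $I\supsetneq\mathrm{Fin}$, I would then split the index set. Choose an infinite $W_0\in I$ (automatically co-infinite, as $I$ is proper) and write $W_0=P\sqcup Q$ with $P,Q$ infinite; put $A:=\omega\setminus P$. Fix a bijection $\alpha\colon\omega\to A$ that is the identity off $W_0$ and maps $W_0$ bijectively onto $Q$, together with a bijection $\rho\colon\omega\to P$. For $x\in F$ define $g_x\in\omega^\omega$ by $g_x(\alpha(m))=\phi(x)(m)$ on $A$ and $g_x(\rho(k))=x(k)$ on $P$, and set $\varepsilon'=\{g_x:x\in F\}$. The map $x\mapsto g_x$ is a continuous injection whose inverse (read off $g_x\upharpoonright P$) is continuous, so a routine limit argument shows $\varepsilon'$ is closed. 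For eventual difference, the agreement set of distinct $g_x,g_{x'}$ is contained in $\alpha\big[\{m:\phi(x)(m)=\phi(x')(m)\}\big]\cup P$; the first block lies in $I$ because $\phi(x),\phi(x')$ are distinct members of the $I$-MED $\varepsilon$, and the second is $P\subseteq W_0\in I$. For maximality, given $h$ I would apply the maximality of $\varepsilon$ to the pullback $h\circ\alpha$, obtaining $x$ with $\{m:\phi(x)(m)=h(\alpha(m))\}\in I^+$; its $\alpha$-image is exactly $\{p\in A:g_x(p)=h(p)\}$, which is $I^+$ because $\alpha$ preserves $I$.

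The crucial (and slightly magical) point is that $\alpha$ preserves the ideal in both directions, i.e. $E\in I\iff\alpha[E]\in I$, and this holds for a trivial reason once $W_0$ is chosen: $\alpha[E]=(E\setminus W_0)\cup\alpha[E\cap W_0]$, where the second piece always lies in $Q\subseteq W_0\in I$, while $E\setminus W_0\in I\iff E\in I$ since $E\cap W_0\subseteq W_0\in I$. Thus hiding the witness block $P$ inside an $I$-set is exactly what lets maximality survive. I expect the main obstacle to be precisely this maximality transfer, and it is worth stressing why the obvious alternative fails: if one instead codes $x$ into the \emph{values} by a pairing $g_x(n)=\langle\phi(x)(n),x(n)\rangle$, then matching $h$ forces simultaneous agreement of the $\varepsilon$-part and of the forced witness-part on a common positive set, which is hopeless because $I^+$ is not closed under intersection. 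Localizing the witness onto an $I$-small coordinate block removes it from the maximality bookkeeping altogether. Finally, this device needs an infinite set in $I$, so the boundary case $I=\mathrm{Fin}$ must be treated separately; there, however, the conclusion is immediate from the already-stated existence of a closed MED family.
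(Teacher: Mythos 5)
The paper states this theorem without any proof, so there is no argument of the authors' to compare yours against; I can only assess your proposal on its own terms, and it is correct. The two pillars both hold up: (1) taking a continuous injection $\phi$ from a closed $F\subseteq\omega^\omega$ onto the Borel family (Lusin--Souslin) and storing the witness $x$ on a reserved coordinate block $P$ does make $\varepsilon'=\{g_x:x\in F\}$ closed, since a convergent sequence $g_{x_n}\to h$ forces $x_n\to x^\ast\in F$ via the $P$-coordinates and then $h=g_{x^\ast}$ by continuity; and (2) your verification that $\alpha$ preserves $I$ in both directions is right, so eventual difference passes through $\alpha$ and is unharmed by arbitrary behaviour on $P\in I$, while maximality transfers by pulling $h$ back along $\alpha$ and pushing the positive agreement set forward. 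Your diagnosis of why the naive value-pairing code fails ($I^+$ is not closed under intersection) is exactly the right reason to localize the witness on an $I$-small block instead. Two small points to make explicit: the dichotomy ``$I=\mathrm{Fin}$ or $I$ contains an infinite set'' uses the standing convention that ideals on $\omega$ are proper and contain $\mathrm{Fin}$ (for genuinely degenerate $I$ the statement is vacuous or trivial, but say so); and in the $I=\mathrm{Fin}$ case you are leaning on the unconditional existence of a closed MED family, which the paper does state as a separate theorem, so the appeal is legitimate but should be flagged as an external input rather than part of your construction.
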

}
%\printaffiliations
\end{document}